\documentclass[12pt,a4paper]{article}
\usepackage{amsmath,amssymb,amsthm}
\setlength{\topmargin}{-0.1 in}
\setlength{\textwidth}{16cm} \setlength{\textheight}{21cm}
\setlength{\oddsidemargin}{-0.2cm}
\setlength{\evensidemargin}{-0.2cm}

\mathsurround .4mm

\usepackage{color}
\usepackage{soul}

\newtheorem{theorem}{Theorem}[section]
\newtheorem{hyp}[theorem]{Hypothesis}
\newtheorem{lemma}[theorem]{Lemma}
\newtheorem{corollary}[theorem]{Corollary}
\newtheorem{proposition}[theorem]{Proposition}
\theoremstyle{definition}

\numberwithin{equation}{section}

\usepackage{titlesec}
\titleformat{\section}{\bfseries}{\thesection}{1em}{}
\titleformat{\subsection}{\itshape}{\thesubsection}{1em}{}

\newcommand{\R}{\mathbb{R}}
\newcommand{\real}{\mathbb{R}}
\newcommand{\N}{\mathbb{N}}
\newcommand{\dd}{\hskip0.2mm\mbox{\rm d}}
\def\expe{\mathrm{e}}

\def\moT#1{\left|#1\right|_{[0,T]}}
\def\mot#1{\left|#1\right|_{[0,t]}}
\def\MoT#1{\left\|#1\right\|_{[0,T]}}
\def\Mot#1{\left\|#1\right\|_{[0,t]}}
\def\for{\mbox{ for }\ }

\def\dehat{\hat}
\def\tht{\mathbf{u}}
\def\vtht{\mathbf{u}}

\def\be{\begin{equation}\label}
\def\ee{\end{equation}}

\def\play{\mathfrak{p}}

\begin{document}

\title{Inverse parameter-dependent Preisach operator in thermo-piezoelectricity modeling
\thanks{This research was supported by RVO: 67985840.}}
\author{Pavel~Krej\v{c}\'{i} and Giselle A. Monteiro
\thanks{Institute of Mathematics, Czech Academy of Sciences,
\v{Z}itn\'{a} 25, 115 67 Praha 1, Czech Republic, e-mail:~{\tt krejci@math.cas.cz},
{\tt gam@math.cas.cz}.}}
\date{}

\maketitle

\begin{abstract}
Hysteresis is an important issue in modeling piezoelectric materials, for example,
in applications to energy harvesting, where hysteresis losses may influence the efficiency of the process.
The main problem in numerical simulations is the inversion of the underlying hysteresis operator.
Moreover, hysteresis dissipation is accompanied with heat production, which in turn increases the
temperature of the device and may change its physical characteristics. More accurate models therefore
have to take the temperature dependence into account for a correct energy balance.
We prove here that the classical Preisach operator with a fairly general parameter-dependence
admits a Lipschitz continuous inverse in the space of right-continuous regulated functions,
propose a time-discrete and memory-discrete inversion algorithm, and show that higher regularity of the inputs leads
to a higher regularity of the output of the inverse.
\end{abstract}
\noindent
{\it Key words:} Hysteresis, Preisach operator, inversion formula, piezoelectricity.

\noindent
{\it MSC(2010):} {34C55, 47J40, 58C07, 49J40}


\section*{Introduction}

The Preisach model for ferromagnetic hysteresis was introduced in \cite{pr} and is still very popular
in both the engineering and the theoretical community for its analytical simplicity and good agreement
with experiments. For inverse control and hysteresis compensation, it is of interest to have some information
about the possibility of theoretical and numerical inversion. An explicit inversion formula is known
for a special case of the Preisach model, the so-called Prandtl-Ishlinskii operator, see \cite{mathz}.
The first result about the existence and analytical properties of a general inverse Preisach operator was published
in \cite{bv}.

The original definition of \cite{pr} and the analysis in \cite{bv} are based on the concept of a {\it two-parameter relay\/}.
Here, we use the formalism of \cite{max}, where a variational definition was shown to be equivalent.
More specifically, the Preisach model is represented in terms of the system
of {\it play operators\/} as solution operators of rate-independent variational inequalities, see Section \ref{vari} below.
In this setting, the Preisach model can be interpreted as a nonlinear generalization of the Prandtl-Ishlinskii operator,
which can be viewed as a linear functional applied to the system of play operators. In special cases of superpositions
of Prandtl-Ishlinskii operators with monotone functions, which also belong to the class of Preisach operators,
an explicit representation of the inverse is still available, see \cite{ku,vs}. In general, no closed form
formula for the inverse Preisach operator is known and numerical approximation schemes have to be used, see, e. g., \cite{error}.

The inversion problem is even more involved if the Preisach operator depends on additional parameter functions.
The situation is favorable in the ``almost Prandtl-Ishlinskii'' case, where an explicit inversion formula
is still available, see \cite{adk}. In the full Preisach case, the existence of a Lipschitz continuous inverse
was proved in \cite{KK} when the Preisach density is just multiplied by a parameter function like in Eq.~\eqref{pe4} below.
Here, we extend the result to the case of general parameter dependence.

The paper is organized as follows. In Section \ref{phys}, we give a brief physical motivation for the study
of inverse parameter-dependent Preisach operators. Section \ref{vari} is devoted to a survey of results
on scalar evolution variational inequalities in the Kurzweil-Stieltjes integral setting in the space of
regulated functions, that is, functions of one variable which admit at each point of their domain of definition
both one-sided limits. The parameter-dependent Preisach operator is introduced in Section \ref{prei}.
Section \ref{inve} represents the core of the paper and brings the main results on the existence and Lipschitz continuity
of the inverse parameter-dependent Preisach operator in the space of regulated functions. In the final Section \ref{regu}
it is shown that if the input and the parameter functions are continuous (resp. absolutely continuous),
then the output of the inverse parameter-dependent Preisach operator is continuous (resp. absolutely continuous)
and the corresponding estimates remain valid.


\section{Physical motivation}\label{phys}

Multifunctional materials transform spontaneously mechanical energy into the electromagnetic one and vice versa.
They find numerous engineering applications in high precision devices like actuators, sensors, or energy harvesters.
This is related to phenomena called piezoelectricity or magnetostriction, and an interested reader can find
more information in \cite{dgv,kamlah,ku,dkprv}. Most such materials exhibit a particular kind of hysteresis:
ferromagnetic hysteresis in the electromagnetic component and what is called ``butterfly hysteresis''
in the magnetoelastic component. It has been observed that the hysteresis loops at different pre-stresses
exhibit a kind of self-similarity. This leads to the hypothesis that all hysteresis effects are due to one single
scalar hysteresis operator acting on one auxiliary self-similar variable,
and a model for magnetostrictive hysteresis based on this hypothesis was proposed in \cite{dkv}
together with parameter identification for data obtained from measurements of Galfenol,
which is one of the well-known magnetostrictive alloys. Let $\mathcal{P}$ be a hysteresis operator, and let
$m, h, \varepsilon, \sigma$ denote the magnetization, magnetic field, strain, and stress, respectively, in 1D
experimental setting. Considering $h$ and $\sigma$ as state variables, the constitutive law was proposed in \cite{dkv}
in the form
\be{pe1}
\begin{array}{rl}
h &= \mathcal{P}[q],\\[2mm]
\varepsilon &= -f'(\sigma) \mathcal{V}[q],\\[2mm]
q &= \frac{h}{f(\sigma)},
\end{array}
\ee
where $\mathcal{V}$ is the hysteresis potential associated with the operator $\mathcal{P}$, see \cite{max}, and $f$
is a self-similarity function which is to be identified from experiments. The model was adapted in \cite{KK} to the case of piezoelectricity
for the functions $P, E$ representing the polarization and electric field, respectively, with $E$ and $\varepsilon$ as
state variables, and the constitutive equations have the form
\be{pe2}
\begin{array}{rl}
P &= \mathcal{P}[q],\\[2mm]
\varepsilon &= f'(\varepsilon) \mathcal{V}[q],\\[2mm]
q &= \frac{E}{f(\varepsilon)}.
\end{array}
\ee
In both cases, $\mathcal{P}$ was chosen to be the Preisach operator introduced in \cite{pr}, and the main reason was,
apart from its nice analytical properties, that an explicit formula for the potential $\mathcal{V}$ is available, see \cite{max}.

More accurate experiments have shown certain discrepancies between the model and the experiments at low fields.
In particular, the above model is not able to explain the phenomenon of mechanical demagnetization or depolarization.
This is why a state dependent mean field feedback term $\alpha(\varepsilon)$ was introduced in \cite{KK18} as a correction
to \eqref{pe2} in the following form:
\be{pe3}
\begin{array}{rl}
P &= \mathcal{P}[q],\\
\varepsilon &= f'(\varepsilon) \mathcal{V}[q] + \frac12\alpha'(\varepsilon)\mathcal{P}^2[q],\\
q &= \frac{1}{f(\varepsilon)}(E - \alpha(\varepsilon)P).
\end{array}
\ee
We see that in this case, the self-similar variable $q$ can be determined in terms of the state variables
as a solution of the equation 
\be{pe4}
q + \frac{\alpha(\varepsilon)}{f(\varepsilon)} \mathcal{P}[q] = \frac{E}{f(\varepsilon)}.
\ee
The solvability of such equations has been established in \cite{KK}. However, since hysteresis energy dissipation produces heat
and the material parameters depend on the temperature, more accurate models have to take the temperature dependence into
account, for example by considering a temperature-dependent Preisach operator $\mathcal{P}(\theta)[q]$ as in \cite{km}, where $\theta$
is the absolute temperature. Then, Eq.~\eqref{pe4} becomes
\be{pe5}
q + \frac{\alpha(\varepsilon)}{f(\varepsilon)} \mathcal{P}(\theta)[q] = \frac{E}{f(\varepsilon)},
\ee
for which the argument of \cite{KK} does not work and has to be refined.


\section{A Kurzweil-Stieltjes integral variational inequality}\label{vari}

In this section we survey known results (see, e.~g., \cite{kuhys}) on the so-called play operator $\play_r$
with threshold $r>0$ in the space of regulated functions defined on a given interval $[0,T]$. Recall that
a function $w: [0,T] \to \real$ is said to be regulated, if both one-sided limits $w(t+), w(t-)$ exist
at every point $t\in [0,T]$ with the convention $w(T+) = w(T)$, $w(0-) = w(0)$. We denote by $G[0,T]$ the
space of regulated functions and by $G_R[0,T]$ the space of right-continuous regulated functions on $[0,T]$.
Indeed, the space $BV[0,T]$ of functions of bounded variation on $[0,T]$ is contained in $G[0,T]$.

With a given $q\in G_R[0,T]$ and a parameter $r>0$, the mapping $\play_r$ associates the solution 
$\xi_r\in BV_R[0,T]:= BV[0,T]\cap G_R[0,T]$ of the integral variational inequality
\begin{equation}\label{play}
\left\{\begin{array}{ll}
|q(t)-\xi_r(t)|\leq r, &  t\in [0,T],
\\[2mm]
\int_0^T\big(q(t)-\xi_r(t)-z(t)\big)\,\dd \xi_r(t)\ge 0,& z\in G[0,T],\, 
\moT{z}\le r, 
\\[2mm]
\xi_r(0)=\max\{q(0)-r\,,\,\min\{0\,,\,q(0)+r\}\,\},
\end{array}
\right.
\end{equation}
where the integral is understood as the Kurzweil-Stieltjes integral. 
According to a more general theory developed in \cite{kl1},
a unique solution $\xi_r=\play_r[q]\in BV_R[0,T]$ of \eqref{play} exists and satisfies the inequality
\begin{equation}\label{v-ineq}
\int_s^t\big(q(\tau)-\xi_r(\tau)-z(\tau)\big)\,\dd \xi_r(\tau)\ge 0,\quad  \forall z\in G[0,T], \moT{z}\le 1,
\end{equation}
for every $0\le s<t\le T$.

The play $\play_r:G_R[0,T]\to BV_R[0,T]$ is Lipschitz continuous
with respect to the sup-norm. More specifically, we endow the space $G[0,T]$ with a family of seminorms
$|w|_{[s,t]} := \sup_{\tau \in [s,t]} |w(\tau)|$. Then for $q_1, q_2 \in G_R[0,T]$ and $t\in[0,T]$ we have
\begin{equation}\label{play-Lip}
|\play_r[q_1](t)-\play_r[q_2](t)|\leq \mot{q_1-q_2}.
\end{equation}
For $0\le t<t+\delta\le T$ and $q \in G_R[0,T]$ we may put in \eqref{play-Lip} $q_1 = q$ and
$$
q_2(s) = 
\left\{
\begin{array}{ll}
q(s) & \for \ s \in [0,t]\,,\\
q(t) & \for \ s \in (t, T]\,. 
\end{array}
\right.
$$
Then by \eqref{play-Lip} the following inequality holds
\begin{equation}\label{play-l}
|\play_r[q](t)-\play_r[q](t+\delta)|\leq |q(\cdot)-q(t)|_{[t,t+\delta]}.
\end{equation}
As a consequence of \eqref{play-l}, we see that $\play_r[q]$ is continuous if $q$ is continuous, and
$\xi_r = \play_r[q]$ is absolutely continuous if $q$ is absolutely continuous. In the latter case,
it follows from \eqref{play-l} that $|\dot\xi_r(t)|\le |\dot q(t)|$ a.\,e., and
\begin{equation}\label{varic}
\dot\xi_r(t)(q(t) - \xi_r(t) - z) \ge 0 \ \mbox{ a.\,e. }\ \forall z \in [-r,r].
\end{equation}
If $q:[0,T]\to\R$ is a right-continuous step function associated with a division $0=s_0<s_1<\dots<s_N=T$ of $[0,T]$
such that
\begin{equation}\label{q-step}
q(t)=\sum_{n=1}^N q_{n-1}\chi_{[s_{n-1},s_n)}(t)+q_{N}\chi_{[s_N]}(t),\quad t\in[0,T],
\end{equation}
then the solution $\xi_r=\play_r[q]$ of the variational inequality \eqref{play} is given by the explicit formula
\begin{equation}\label{play-step}
\xi_r(t)=\sum_{n=1}^N\xi^r_{n-1}\chi_{[s_{n-1},s_n)}(t)+\xi^r_{N}\chi_{[s_N]}(t),\quad t\in[0,T],
\end{equation}
where $\xi^r_{0}=\max\{q_0-r\,,\,\min\{0,q_0+r\}\,\}$ and
\begin{equation}\label{xi-n}
\xi^r_{n}=\max\{q_n-r\,,\,\min\{\xi^r_{n-1},q_n+r\}\,\},\quad n=1,\dots,N.
\end{equation}
Formula \eqref{xi-n} can be equivalently stated in variational form
\begin{equation}\label{v-ineq-n}
(\xi^{r}_n-\xi^{r}_{n-1})\,(q_n-\xi^{r}_n-r\,z)\ge 0\quad\mbox{for \ }z\in\R,\,\,|z|\le 1,\quad n=1,\dots,N.
\end{equation}
The following result is proved in \cite[Lemma 4.4]{KLR-arXiv} as a special case of the so-called Brokate's identity.

\begin{lemma}\label{Brokate-id}
Let $\rho>0$ and $\sigma>0$ be given, and let $q:[0,T]\to\R$ be a right-continuous step function as in \eqref{q-step}. 
Put $\xi=\play_\rho[q]$ and $\eta=\play_\sigma[\xi]$. Then $\eta=\play_{\rho+\sigma}[q]$ and the implication
\begin{equation}\label{B-id}
\eta_{n}-\eta_{n-1}\ne 0\quad\Rightarrow\quad (\xi_{n}-\xi_{n-1})\,(\eta_{n}-\eta_{n-1})>0 
\end{equation}
holds for every $n=1,\dots,N$.
\end{lemma}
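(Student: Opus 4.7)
The plan is to prove both assertions by joint induction on $n$, verifying simultaneously that $\eta_n=\play_{\rho+\sigma}[q]_n$ (via the explicit recursion \eqref{xi-n}) and that the Brokate implication \eqref{B-id} holds at step $n$. The base case $n=0$ is a short direct calculation: starting from $\xi_0=\max\{q_0-\rho,\min\{0,q_0+\rho\}\}$ and $\eta_0=\max\{\xi_0-\sigma,\min\{0,\xi_0+\sigma\}\}$, a case split on where $q_0$ lies relative to $\pm\rho$ and $\pm(\rho+\sigma)$ shows that $\eta_0=\max\{q_0-(\rho+\sigma),\min\{0,q_0+(\rho+\sigma)\}\}$. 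No Brokate implication is needed at $n=0$.

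For the inductive step I would exploit a precise trichotomy for the outer $\sigma$-play. A direct inspection of $\eta_n=\max\{\xi_n-\sigma,\min\{\eta_{n-1},\xi_n+\sigma\}\}$ shows that exactly one of the following holds: (i) $|\eta_{n-1}-\xi_n|\le\sigma$, in which case $\eta_n=\eta_{n-1}$; (ii) $\eta_{n-1}<\xi_n-\sigma$, in which case $\eta_n=\xi_n-\sigma>\eta_{n-1}$; or (iii) $\eta_{n-1}>\xi_n+\sigma$, in which case $\eta_n=\xi_n+\sigma<\eta_{n-1}$. In case (i) the triangle inequality $|q_n-\eta_{n-1}|\le|q_n-\xi_n|+|\xi_n-\eta_{n-1}|\le\rho+\sigma$ shows that the $(\rho+\sigma)$-play update also returns $\eta_{n-1}$, matching $\eta_n$; the Brokate implication is vacuous.

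The interesting cases (ii) and (iii) are symmetric, so consider (ii). Then $\xi_n=\eta_n+\sigma$, and the pointwise play bound $|\xi_{n-1}-\eta_{n-1}|\le\sigma$, inherent in $\eta=\play_\sigma[\xi]$ at step $n-1$, yields $\xi_{n-1}\le\eta_{n-1}+\sigma<\eta_n+\sigma=\xi_n$, so $\xi_n>\xi_{n-1}$. Reading \eqref{xi-n} in reverse, a strict increase of $\xi$ forces $\xi_n=q_n-\rho$, whence $q_n=\eta_n+(\rho+\sigma)$. Substituting this into the $(\rho+\sigma)$-play update gives $\max\{q_n-(\rho+\sigma),\min\{\eta_{n-1},q_n+(\rho+\sigma)\}\}=\max\{\eta_n,\eta_{n-1}\}=\eta_n$, confirming $\eta_n=\play_{\rho+\sigma}[q]_n$; and since both $\xi_n-\xi_{n-1}$ and $\eta_n-\eta_{n-1}$ are strictly positive, \eqref{B-id} holds with the correct sign. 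Case (iii) is handled by the mirror argument.

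The only point really requiring care is to identify the correct invariant that drives the induction: the elementwise bound $|\xi_{n-1}-\eta_{n-1}|\le\sigma$, automatic from the definition of the $\sigma$-play, is exactly what converts a jump of $\eta$ into a jump of $\xi$ of the same sign. Once that sign-preservation is secured, the scalar update \eqref{xi-n} forces $\xi_n$ to sit on the $\pm\rho$ boundary relative to $q_n$, and the thresholds then add cleanly to $\rho+\sigma$.
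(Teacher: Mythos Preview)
Your argument is correct. The paper does not give its own proof of this lemma but cites \cite[Lemma~4.4]{KLR-arXiv}; your induction on $n$ with the trichotomy for the $\sigma$-play update, driven by the invariant $|\xi_{n-1}-\eta_{n-1}|\le\sigma$, is a clean self-contained substitute and there is nothing further to compare.
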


We reformulate the above statement for arbitrary right-continuous regulated inputs.

\begin{lemma}\label{lem3.5}
Let $0<r_1<r_2$ and $q\in G_R[0,T]$ be given, and let $\xi_{r_i}\in BV_R[0,T]$
be the respective solutions of \eqref{play} with $r=r_i$, $i=1,2$. Then
\begin{equation}\label{eq34}
\left\{\begin{array}{ll}
|\xi_{r_2}(t)-\xi_{r_1}(t)|\leq r_2-r_1, &  t\in [0,T],
\\[2mm]
\int_0^T\big(\xi_{r_1}(t)-\xi_{r_2}(t)-z(t)\big)\,\dd \xi_{r_2}(t)\ge 0,& \forall z\in G[0,T],\, 
\moT{z}\le r_2-r_1.
\end{array}
\right.
\end{equation}
\end{lemma}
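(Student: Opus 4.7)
The plan is to reduce the lemma to a semigroup-type composition identity for the play operator, namely $\xi_{r_2} = \play_{r_2 - r_1}[\xi_{r_1}]$. Once this is established, both assertions of \eqref{eq34} follow by simply reading off the two lines of the defining variational inequality \eqref{play} with threshold $r_2 - r_1$ and input $\xi_{r_1} \in G_R[0,T]$: the first line gives exactly $|\xi_{r_1}(t) - \xi_{r_2}(t)| \le r_2 - r_1$, and the second line gives the Kurzweil-Stieltjes inequality in the statement of the lemma.

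To establish the composition identity I invoke Lemma~\ref{Brokate-id} with $\rho = r_1$ and $\sigma = r_2 - r_1$; this gives the identity $\play_{r_2}[q] = \play_{r_2 - r_1}[\play_{r_1}[q]]$ whenever $q$ is a right-continuous step function. I then extend it to an arbitrary $q \in G_R[0,T]$ by uniform approximation. Since regulated functions are uniform limits of right-continuous step functions, I pick a sequence $q^{(k)}$ with $|q^{(k)} - q|_{[0,T]} \to 0$ and set $\xi^{(k)}_r := \play_r[q^{(k)}]$ for $r \in \{r_1, r_2\}$. By \eqref{play-Lip} we have $\xi^{(k)}_{r_i} \to \xi_{r_i}$ uniformly on $[0,T]$, and applying \eqref{play-Lip} a second time to the convergent input sequence $\xi^{(k)}_{r_1} \to \xi_{r_1}$ gives $\play_{r_2 - r_1}[\xi^{(k)}_{r_1}] \to \play_{r_2 - r_1}[\xi_{r_1}]$ uniformly. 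Since Lemma~\ref{Brokate-id} yields $\xi^{(k)}_{r_2} = \play_{r_2 - r_1}[\xi^{(k)}_{r_1}]$ for every $k$, passing to the limit produces the composition identity in the general regulated case.

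The main technical point I anticipate is checking that the initial value prescribed by the third line of \eqref{play} for $\play_{r_2 - r_1}[\xi_{r_1}]$ actually coincides with $\xi_{r_2}(0) = \max\{q(0) - r_2,\, \min\{0,\, q(0) + r_2\}\}$. Substituting $\xi_{r_1}(0) = \max\{q(0) - r_1,\, \min\{0,\, q(0) + r_1\}\}$ into $\max\{\xi_{r_1}(0) - (r_2 - r_1),\, \min\{0,\, \xi_{r_1}(0) + (r_2 - r_1)\}\}$ and doing a short case analysis according to whether $q(0) > r_1$, $|q(0)| \le r_1$, or $q(0) < -r_1$ confirms the equality and thus makes the approximation argument legitimate. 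Beyond this initial-value check, the argument is a routine combination of Brokate's identity and the Lipschitz estimate \eqref{play-Lip}, with no further obstacle.
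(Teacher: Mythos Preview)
Your proposal is correct and follows essentially the same route as the paper: approximate $q$ by right-continuous step functions, apply Lemma~\ref{Brokate-id} with $\rho=r_1$, $\sigma=r_2-r_1$ to get $\play_{r_2}[q_n]=\play_{r_2-r_1}[\play_{r_1}[q_n]]$, and pass to the limit using the Lipschitz estimate \eqref{play-Lip}. Your explicit initial-value check is harmless but unnecessary, since the compatibility of initial values is already built into Lemma~\ref{Brokate-id} at the step-function level and is therefore inherited automatically in the limit.
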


\begin{proof}
Since $q\in G_R[0,T]$, there exists a sequence $\{q_n\}$ of right-continuous step functions such that 
$\moT{q_n-q} \to 0$. Denote $\xi^1_n=\play_{r_1}[q_n]$ and $\xi^2_n=\play_{r_2}[q_n]$. By Lemma \ref{Brokate-id},
we have $\xi^2_n=\play_{r_2-r_1}[\xi^1_n]$. The Lipschitz continuity \eqref{play-Lip} of the play operator ensures that
$\moT{\xi^1_n-\xi_{r_1}}\to 0$. We can therefore pass to the limit as $n \to \infty$ and conclude that
$\xi_{r_2}=\play_{r_2-r_1}[\xi_{r_1}]$, which is \eqref{eq34} by definition. 
\end{proof}


\section{Preisach operator}\label{prei}

Let $\psi:\R\times\R_+\times\R\to\R$ be a measurable function. 
For an input $q\in G_R[0,T]$ and parameter vector function $\tht\in G_R([0,T];\real^L)$ we define the output of a
parameter-dependent Preisach operator $\mathcal{P}$ by the integral formula
\begin{equation}\label{P}
\mathcal{P}(\tht)[q](t)=\int_0^\infty\int_0^{\xi_r(t)} \psi(\tht(t),r,v)\,\dd v\,\dd r,
\end{equation}
where $\xi_r=\play_r[q]$ is the solution of the integral variational inequality \eqref{play}. Clearly, for constant parameter functions $\tht$ we retrieve the definition of the classical Preisach operator as presented in \cite{max}.

To ensure that the definition \eqref{P} is meaningful we adopt the following hypothesis.

\begin{hyp}\label{basic} We assume that for all $\vtht \in \real^L$ we have
\begin{enumerate}
\item[$(i)$] $0\leq \psi(\vtht,r,v)\le \mu(r)$ a.e., where $\mu\in L^1(0,\infty)$ and $M=\int_0^\infty\mu(r)\dd r$;
\item[$(ii)$] $\left\|\nabla_\vtht\psi(\vtht,r,v)\right\|\le K(r,v)$ a.e., where 
$K\in L^1((0,\infty)\times\R)$, $\|\cdot\|$ denotes the Euclidean norm in $\real^L$, and 
\[
M_1=\int_0^\infty\int_{-\infty}^\infty K(r,v)\dd v\,\dd r.
\]
\end{enumerate}
\end{hyp}

It follows immediately from the definition \eqref{play} that for each $q\in G_R[0,T]$,
$r\geq\moT{q}$, and $t\in[0,T]$ we have $\xi_r(t)=\play_r[q](t)= 0$.
Hence in \eqref{P} we integrate over bounded intervals, and Hypothesis \ref{basic}\,(i)
guarantees that $\mathcal{P}(\tht)[q](t)$ is well defined. For $(\vtht,r,s)\in \R^L\times\R_+\times\R$ denote
\begin{equation}\label{g}
g(\vtht,r,v)=\int_0^v\psi(\vtht,r,s)\,\dd s.
\end{equation}
Since $\psi\ge 0$ a.\,e., the mapping $v\in\R\mapsto g(\vtht,r,v)$ is nondecreasing for each $\vtht\in\R^L$ and $r>0$.
Moreover, Hypotheses \ref{basic}\,(i) and (ii) imply that 
\begin{equation}\label{g-ineq}
|g(\vtht_1,r,v_1)-g(\vtht_2,r,v_2)|\leq \mu(r)\,|v_1-v_2|+K^*(r)\,\|\vtht_1-\vtht_2\|,
\end{equation}
for every $\vtht_1,\vtht_2\in \real^L$, $v_1,v_2\in\R$, and $r> 0$, with $K^*(r)=\int_{-\infty}^\infty K(r,s)\,\dd s$.

Formula \eqref{P} can be rewritten in the form
\begin{equation}\label{Pg}
\mathcal{P}(\tht)[q](t)=\int_0^\infty g(\tht(t),r,\play_r[q](t))\,\dd r
\end{equation}
for $\tht \in G_R([0,T];\real^L)$, $q\in G_R[0,T]$, and $t \in [0,T]$.
From \eqref{g-ineq} it also follows that the operator $\mathcal{P}$ is Lipschitz continuous in the sense that 
\begin{equation}\label{P-Lip}
\big|\mathcal{P}(\tht_1)[q_1](t)-\mathcal{P}(\tht_2)[q_2](t)\big|
	\leq M\,\mot{q_1-q_2}+ M_1\,\|\tht_1(t)-\tht_2(t)\|
\end{equation}
for every $q_1,q_2\in G_R[0,T]$, $\tht_1,\tht_2\in G_R([0,T];\real^L)$, and $t\in[0,T]$.
Similarly as in \eqref{play-l} we also have 
\begin{equation}\label{P-reg}
\big|\mathcal{P}(\tht)[q](t)-\mathcal{P}(\tht)[q](s)\big|
	\le M\,|q(\cdot)-q(s)|_{[s,t]} + M_1\,\|\tht(t)-\tht(s)\|,
\end{equation}
for every $0\leq s<t\leq T$. Therefore, the integral expression \eqref{P} defines an operator
$\mathcal{P}: G_R([0,T];\real^L)\times G_R[0,T]\to G_R[0,T]$ which is Lipschitz continuous in its domain of definition.
In addition, if both $\tht$ and $q$ are continuous functions (respect. absolutely continuous), then so is $P(\tht)[q]$.


\section{Existence of the inverse}\label{inve}

Given $\tht\in G_R([0,T];\real^L)$, $w\in G_R[0,T]$, we will show that there exists a unique $q\in G_R[0,T]$ such that
\begin{equation}\label{eq}
w(t)=q(t)+\mathcal{P}(\tht)[q](t),\quad t\in[0,T].
\end{equation}

\medskip

Firstly, assume that both $w$ and $\tht$ are right-continuous step functions and  
let $0=s_0<s_1<\dots<s_N=T$ be a division such that
\[
w(t)=\sum_{n=1}^Nw_{n-1}\chi_{[s_{n-1},s_n)}(t)+w_{N}\chi_{[s_N]}(t),
\quad
\tht(t)=\sum_{n=1}^N\tht_{n-1}\chi_{[s_{n-1},s_n)}(t)+\tht_{N}\chi_{[s_N]}(t).
\]
We will prove by induction that, in this case, a function $q$ satisfying \eqref{eq} is of the form 
\[
q(t)=\sum_{n=1}^Nq_{n-1}\chi_{[s_{n-1},s_n)}(t)+q_{N}\chi_{[s_N]}(t),\quad t\in[0,T].
\]
Recall that, for such a $q$, the function $\xi_r=\play_r[q]$, $r\ge 0$, is given by 
\[
\xi_r(t)=\sum_{n=1}^N\xi^r_{n-1}\chi_{[s_{n-1},s_n)}(t)+\xi^r_{N}\chi_{[s_N]}(t), \quad t\in[0,T],
\]
with $\xi^r_{n}\in\R$ defined by \eqref{xi-n}. Thus, rewriting \eqref{eq} in $[0,s_1)$ we obtain
\[
w_0=q_0+\int_0^\infty g(\tht_0,r,\xi_0^r)\,\dd r,
\]
where $g$ is the function given by \eqref{g}. 
The existence of a unique $q_0$ satisfying the equality above is guaranteed by the fact that the right-hand side defines an increasing continuous function of $q_0$.

Assume that we have determined $q_n\in\R$, $n=0,1,\dots, k-1$, with $k>0$, such that the step function $q(t)$ satisfies \eqref{eq} on $[0,s_{k})$. If $k<N$, put
\[
V_{k+1}=\{\dehat{q}\in G[0,s_k+1]\,:\,\dehat{q}(t)=q(t)+\dehat{q}_k\chi_{[s_k,s_{k+1})}(t)\,\},
\]
otherwise, for $k=N$ consider
\[
V_{N}=\{\dehat{q}\in G[0,s_N]\,:\,\dehat{q}(t)=q(t)+\dehat{q}_N\chi_{[s_N]}(t)\,\}.
\]
Thus, a function $\dehat{q}$ in $V_{k+1}$ (respect. in $V_N$) satisfies \eqref{eq} in $[0,s_{k+1})$ (respect. in $[0,s_N]$) if the following equality holds
\[
w_k=\dehat{q}_k+\int_0^\infty g(\tht_k,r,\dehat{\xi}_k^r)\,\dd r,
\]
with $\dehat{\xi}^r_{k}=\max\{\dehat{q}_k-r\,,\,\min\{\xi^r_{k-1},\dehat{q}_k+r\}\}$.
The monotonicity of $g(\tht_k,r,\cdot)$ and of the `dead zone function'
$\dehat{q} \mapsto \max\{\dehat{q}-r\,,\,\min\{\xi^r_{k-1},\dehat{q}+r\}\}$
ensure that the right-hand side defines an increasing continuous mapping of $\dehat{q}_k$.
Thus, there exists a unique $q_k$ satisfying the equality above, and consequently we obtain a function $q$
satisfying \eqref{eq} in either $[0,s_{k+1})$ or $[0,s_N]$. The induction argument over $k$
completes the proof of the following result:

\begin{theorem}\label{exist-step}
Assume that $\psi$ satisfies Hypothesis \ref{basic}. If $\tht:[0,T]\to\R^L$, $w:[0,T]\to\R$ are right-continuous step functions, then there exists a unique right-continuous step function $q:[0,T]\to\R$ satisfying \eqref{eq}.
\end{theorem}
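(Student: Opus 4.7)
The plan is to complete the induction already set up in the discussion preceding the statement. What remains is to verify, at each step $k=0,1,\dots,N$, that the scalar map
\[
F_k(\dehat{q}) = \dehat{q} + \int_0^\infty g\bigl(\tht_k,r,\dehat\xi^r_k(\dehat{q})\bigr)\,\dd r,\qquad
\dehat\xi^r_k(\dehat{q}) = \max\bigl\{\dehat{q}-r\,,\,\min\{\xi^r_{k-1},\dehat{q}+r\}\bigr\},
\]
(with the convention $\xi^r_{-1}\equiv 0$ for $k=0$) is a continuous, strictly increasing bijection of $\R$ onto $\R$. Then $q_k$ is the unique solution of $F_k(q_k)=w_k$, the step function $q$ constructed so far is extended to one more subinterval, and the induction closes.

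First I would check \emph{strict monotonicity}. For each $r>0$ the ``dead zone'' $\dehat{q}\mapsto\dehat\xi^r_k(\dehat{q})$ is nondecreasing (and continuous), and by Hypothesis~\ref{basic}\,(i) the function $v\mapsto g(\tht_k,r,v)$ is nondecreasing since $\psi\ge 0$. Hence the integrand is nondecreasing in $\dehat{q}$ for every $r$, and the identity term makes $F_k$ strictly increasing.

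Next, \emph{continuity}. The Lipschitz estimate \eqref{g-ineq} gives $|g(\tht_k,r,v_1)-g(\tht_k,r,v_2)|\le\mu(r)|v_1-v_2|$, and the dead zone is $1$-Lipschitz in $\dehat{q}$. For $\dehat{q}$ ranging over a fixed compact set $[-R,R]$ we have $|\dehat\xi^r_k(\dehat{q})|\le R+r$; but for $r$ larger than some $R_0$ (depending only on $q_0,\dots,q_{k-1}$) the history satisfies $\xi^r_{k-1}=0$, so a direct case analysis of the dead zone shows $|\dehat\xi^r_k(\dehat{q})|\le\max\{0,|\dehat{q}|-r\}\le R$. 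Thus $|g(\tht_k,r,\dehat\xi^r_k(\dehat{q}))|\le\mu(r)\,R$ for $r\ge R_0$, while on $[0,R_0]$ the bound $\mu(r)(R+r)$ is integrable. Dominated convergence then yields continuity of $F_k$.

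Finally, \emph{surjectivity}. Fix any $\dehat{q}^0$. Monotonicity of $g$ and of the dead zone in $\dehat{q}$ gives, for $\dehat{q}\ge\dehat{q}^0$, the lower bound $F_k(\dehat{q})\ge\dehat{q}+\int_0^\infty g(\tht_k,r,\dehat\xi^r_k(\dehat{q}^0))\,\dd r$, so $F_k(\dehat{q})\to+\infty$ as $\dehat{q}\to+\infty$; the symmetric bound gives $F_k(\dehat{q})\to-\infty$ as $\dehat{q}\to-\infty$. By the intermediate value theorem combined with strict monotonicity, $F_k$ maps $\R$ bijectively onto $\R$.

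The only mildly delicate point is the last one combined with integrability: one must observe that beyond the threshold $R_0$ the dead zone clamps $\dehat\xi^r_k$ to $0$ once $r\ge|\dehat{q}|$, which is what keeps the integral $\int_0^\infty g\,\dd r$ finite and continuous in $\dehat{q}$ despite the unbounded domain of integration. Everything else is an application of the Brokate-type update formula \eqref{xi-n} already established for the play, and of Hypothesis~\ref{basic}.
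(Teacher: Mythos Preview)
Your proof is correct and follows essentially the same inductive scheme as the paper: at each step one checks that the scalar map $\dehat q\mapsto \dehat q+\int_0^\infty g(\tht_k,r,\dehat\xi^r_k(\dehat q))\,\dd r$ is a strictly increasing continuous bijection of $\R$. The paper asserts ``increasing continuous'' without further comment, whereas you supply the details (dominated convergence for continuity, the observation that $\dehat\xi^r_k$ vanishes for large $r$, and the explicit $\pm\infty$ limits for surjectivity); these are exactly the points a careful reader would want to see filled in.
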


Next we consider discrete Preisach operators of the form 
\begin{equation}\label{disc}
\mathcal{P}_k(\tht)[q](t)=\sum_{j=1}^k g_j(\tht(t),\xi_{r_j}(t)),\quad 
t\in[0,T],\,k\in\N,
\end{equation}
where $0<r_1<r_2<\dots$, and $g_j:\R^{L+1}\to\R$, $j\in\N$, are functions fulfilling the following conditions:

\begin{hyp}\label{H2}
Assume that $g_j(\vtht,0)=0$ for all $\vtht\in\R^L$, and
\begin{enumerate}
\item[$(i)$] $g_j$ is nondecreasing in the second variable;
\item[$(ii)$] there exist positive numbers $\mu_j$ and $K_j$ such that 
\begin{equation}\label{lip}
|g_j(\vtht_1,v_1)-g_j(\vtht_1,v_2)|\leq K_j\,\|\vtht_1-\vtht_2\|+\mu_j\,|v_1-v_2|,
\end{equation}
for every $\vtht_1,\vtht_2\in \real^L$, $v_1,v_2\in\R$.
\end{enumerate}
\end{hyp}

Clearly, the argument of the proof of Theorem \ref{exist-step} remains valid for Eq.~\eqref{eq} as well
if the operator $\mathcal{P}$ is replaced with $\mathcal{P}_k$. In other words, for each given right-continuous
step functions $\tht:[0,T]\to\R^L$, $w:[0,T]\to\R$ there exists a unique right-continuous step function 
$q:[0,T]\to\R$ such that 
\be{diseq}
q(t)+\mathcal{P}_k(\tht)[q](t)=w(t),\quad t\in[0,T].
\ee
To establish the continuity of the inverse of the operator $I+\mathcal{P}_k$,	 the following lemma is crucial.

\begin{lemma}\label{p36a}
Let Hypotheses \ref{H2} (i), (ii) hold, let $\mathcal{P}_k$ be the mapping defined by \eqref{disc},
and  let $\tht:[0,T]\to\R^L$, $w,\,\dehat{w}:[0,T]\to\R$ be right-continuous step functions. 
If $q,\,\dehat{q}\in G_R[0,T]$ satisfy
\begin{equation}\label{eq41}
\begin{split}
q(t)+\mathcal{P}_k(\tht)[q](t)=w(t),
\\[2mm]
\dehat{q}(t)+\mathcal{P}_k(\tht)[\dehat{q}](t)=\dehat{w}(t),
\end{split}
\quad\qquad t\in[0,T],
\end{equation}
for some $k\in\N$, then for each $t\in[0,T]$ we have
\begin{equation}\label{eq42}
|q(t)-\dehat{q}(t)|\le \rho_k\,\mot{w-\dehat{w}},
	\quad\mbox{where \ }\rho_k=\prod_{j=1}^k(1+\mu_j).
\end{equation}
\end{lemma}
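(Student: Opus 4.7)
By Theorem \ref{exist-step} applied with $\mathcal{P}_k$ in place of $\mathcal{P}$, both $q$ and $\hat q$ are right-continuous step functions over the same partition $0=s_0<s_1<\cdots<s_N=T$ as $w,\hat w,\tht$, with values $q_n,\hat q_n$ at $s_n$. The associated plays $\xi_{j,n}=\play_{r_j}[q](s_n)$ and $\hat\xi_{j,n}=\play_{r_j}[\hat q](s_n)$ are then given by the discrete clamping recursion \eqref{xi-n}. The plan is to prove \eqref{eq42} by induction on the partition index $n$.

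For the base case $n=0$, the mapping $q_0\mapsto q_0+\sum_j g_j(\tht_0,F_j(q_0,0))$, where $F_j(q,\xi)=\max\{q-r_j,\min\{\xi,q+r_j\}\}$, is strictly increasing with derivative $\geq 1$ — by Hypothesis \ref{H2}(i) applied to $g_j$ together with the nondecreasing character of the clamping map $F_j(\cdot,0)$. Hence $|q_0-\hat q_0|\leq|w_0-\hat w_0|$, which already satisfies the required bound since $\rho_k\geq 1$.

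For the inductive step, I would subtract the two equations in \eqref{eq41} at time $s_n$ to obtain
\[
(q_n-\hat q_n)+\sum_{j=1}^k\bigl[g_j(\tht_n,\xi_{j,n})-g_j(\tht_n,\hat\xi_{j,n})\bigr]=w_n-\hat w_n,
\]
and split each bracket through the intermediate value $g_j(\tht_n,F_j(\hat q_n,\xi_{j,n-1}))$. The first piece, measuring only the change $q_n\to\hat q_n$ with $\xi_{j,n-1}$ held fixed, is sign-consistent with $q_n-\hat q_n$ (by monotonicity of $g_j(\tht_n,\cdot)$ and monotonicity of $F_j$ in its first argument); the second piece, measuring the change $\xi_{j,n-1}\to\hat\xi_{j,n-1}$ with $\hat q_n$ held fixed, is controlled by $\mu_j\,|\xi_{j,n-1}-\hat\xi_{j,n-1}|$ via \eqref{lip} and the $1$-Lipschitz continuity of $F_j$ in its second argument. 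Taking absolute values after discarding the favorable first piece yields the pointwise estimate
\[
|q_n-\hat q_n|\leq|w_n-\hat w_n|+\sum_{j=1}^k\mu_j\,|\xi_{j,n-1}-\hat\xi_{j,n-1}|.
\]

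The main obstacle will be recovering exactly the product factor $\rho_k=\prod_{j=1}^k(1+\mu_j)$ uniformly in $n$. A naive combination with the play-Lipschitz estimate \eqref{play-Lip} would give only the weaker constant $1+\sum_j\mu_j$ per time step, compounding exponentially in $n$. To avoid this I would set up a simultaneous induction that controls both $|q_n-\hat q_n|$ and $\max_j|\xi_{j,n}-\hat\xi_{j,n}|$ by $\rho_k|w-\hat w|_{[0,s_n]}$, exploiting the sign-preserving nesting of the plays from the Brokate identity (Lemma \ref{Brokate-id}) so that the contributions of the $k$ distinct thresholds combine into the single product $(1+\mu_1)\cdots(1+\mu_k)$ rather than accumulating additively across time steps.
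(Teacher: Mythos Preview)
Your pointwise estimate
\[
|q_n-\hat q_n|\le |w_n-\hat w_n|+\sum_{j=1}^k\mu_j\,|\xi_{j,n-1}-\hat\xi_{j,n-1}|
\]
is correct, and you have correctly identified the obstacle. But the remedy you propose --- a simultaneous induction on $n$ bounding both $|q_n-\hat q_n|$ and $\max_j|\xi_{j,n}-\hat\xi_{j,n}|$ by $\rho_k\,|w-\hat w|_{[0,s_n]}$ --- does not close. If at step $n-1$ you only have $|\xi_{j,n-1}-\hat\xi_{j,n-1}|\le\rho_k\,|w-\hat w|_{[0,s_{n-1}]}$, your displayed inequality yields
\[
|q_n-\hat q_n|\le\Bigl(1+\rho_k\sum_{j=1}^k\mu_j\Bigr)|w-\hat w|_{[0,s_n]},
\]
which strictly exceeds $\rho_k\,|w-\hat w|_{[0,s_n]}$ whenever some $\mu_j>0$. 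Invoking the Brokate identity at this point does not help: the sign information it supplies says nothing about the \emph{size} of $|\xi_{j,n-1}-\hat\xi_{j,n-1}|$, which is what your recursion feeds on.

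The paper's argument is organized differently: the outer induction is on $k$, the number of thresholds, not on $n$. The crucial intermediate claim, established for each fixed $k$, is the much sharper bound $|\xi^{r_k}_n-\hat\xi^{r_k}_n|\le |w-\hat w|_{[0,t]}$ for all $n\le\ell$ --- with constant $1$, not $\rho_k$. This is not obtained from the play Lipschitz estimate but from the variational inequality \eqref{v-ineq-n}: one shows that $v_n:=\max\{|\xi^{r_k}_n-\hat\xi^{r_k}_n|,\,|w-\hat w|_{[0,t]}\}$ is nonincreasing in $n$ by contradiction, applying \eqref{v-ineq-n} to $\play_{r_k}$ acting on $q$, on $\hat q$, and --- via the Brokate identity --- on $\xi_{r_j},\hat\xi_{r_j}$ for $j<k$, so that after the monotone $g_j$-terms drop out with the right sign one arrives at $(\xi^{r_k}_n-\hat\xi^{r_k}_n)^2\le(\xi^{r_k}_n-\hat\xi^{r_k}_n)(w_n-\hat w_n)$. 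With this sharp play bound in hand, the induction hypothesis for $k-1$ applied to the rewritten equation $q+\mathcal{P}_{k-1}(\tht)[q]=w-g_k(\tht,\xi_{r_k})$ delivers exactly $\rho_{k-1}(1+\mu_k)=\rho_k$. The product structure of $\rho_k$ thus arises from peeling off one threshold at a time, not from a time-step recursion; your displayed estimate is never used.
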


\begin{proof}
Let $0=s_0<s_1<\dots<s_N=T$ be a division of $[0,T]$ such that
\[
\tht(t)=\sum_{n=1}^N\tht_{n-1}\chi_{[s_{n-1},s_n)}(t)+\tht_{N}\chi_{[s_N]}(t),
\]
\[
w(t)=\sum_{n=1}^Nw_{n-1}\chi_{[s_{n-1},s_n)}(t)+w_{N}\chi_{[s_N]}(t),
\quad
\dehat{w}(t)=\sum_{n=1}^N\dehat{w}_{n-1}\chi_{[s_{n-1},s_n)}(t)+\dehat{w}_{N}\chi_{[s_N]}(t).
\]
According to Theorem \ref{exist-step}, we have
\[
q(t)=\sum_{n=1}^Nq_{n-1}\chi_{[s_{n-1},s_n)}(t)+q_{N}\chi_{[s_N]}(t),
\quad
\dehat{q}(t)=\sum_{n=1}^N\dehat{q}_{n-1}\chi_{[s_{n-1},s_n)}(t)+\dehat{q}_{N}\chi_{[s_N]}(t),
\]
and consequently the corresponding $\xi_r=\play_r[q]$ and $\dehat{\xi}_r=\play_r[\dehat{q}]$, for $r\ge 0$, read as follows
\[
\xi_r(t)=\sum_{n=1}^N\xi^r_{n-1}\chi_{[s_{n-1},s_n)}(t)+\xi^r_{N}\chi_{[s_N]}(t),
\quad
\dehat{\xi}_r(t)=\sum_{n=1}^N\dehat{\xi}^r_{n-1}\chi_{[s_{n-1},s_n)}(t)+\dehat{\xi}^r_{N}\chi_{[s_N]}(t).
\]
Note that for $t=0$, we have
\[
w_0-\dehat{w}_0=q_0-\dehat{q}_0+\sum_{j=1}^k(g_j(\tht_0,\xi^{r_j}_0)-g_j(\tht_0,\dehat{\xi}^{r_j}_0)).
\]
Thus, the monotonicity of $g_j(\tht_0,\cdot)$ and of the initial value mapping imply that
\begin{equation}\label{q_0}
|q_0-\dehat{q}_0|\leq |w_0-\dehat{w}_0|,
\end{equation}
and \eqref{eq42} holds under the standard convention $\prod_{j=1}^0(1+\mu_j) = 1$.

Assume $t\in(0,T)$ and let 
$\ell\in\{0,1,\dots,N-1\}$ be such that $t\in[s_{\ell},s_{\ell+1})$ 
(the case $t=T$ can be treated in similar way). 
We will prove \eqref{eq42} by induction over $k$.  
For $k=1$ the equalities in \eqref{eq41} are  equivalent to 
\begin{equation*}\label{eq43}
q_n+g_1(\tht_n,\xi^{r_1}_n)=w_n,
\quad\dehat{q}_n+g_1(\tht_n,\dehat{\xi}^{r_1}_n)=\dehat{w}_n,\quad\mbox{for \ }n=0,1,\dots,N.
\end{equation*}
This together with \eqref{lip} implies
\begin{align*}
|q(t)-\dehat{q}(t)|=|q_\ell-\dehat{q}_\ell|
	&\le |w_\ell-\dehat{w}_\ell|+|g_1(\tht_\ell,\xi^{r_1}_\ell)-g_1(\tht_\ell,\dehat{\xi}^{r_1}_\ell)|
	\\&\le 
\mot{w-\dehat{w}}+\mu_1\,|\xi^{r_1}_\ell-\dehat{\xi}^{r_1}_{\ell}|.
\end{align*}
It remains to show that $|\xi^{r_1}_\ell-\dehat{\xi}^{r_1}_{\ell}|\leq \mot{w-\dehat{w}}$. Note that, by applying \eqref{v-ineq-n} with an appropriate choice of $z\in\R$ we obtain
\[
(\xi^{r_1}_n-\xi^{r_1}_{n-1})\,\big(q_n-\dehat{q}_n-(\xi^{r_1}_n-\dehat{\xi}^{r_1}_{n})\big)\ge 0,
\quad
(\dehat{\xi}^{r_1}_{n}-\dehat{\xi}^{r_1}_{n-1})\,\big(\dehat{q}_n-q_n-(\dehat{\xi}^{r_1}_{n}-\xi^{r_1}_n)\big)\ge 0,
\]
therefore, for every $n=1,\dots,N$, 
\begin{equation}\label{eq46}
\big((\xi^{r_1}_n-\dehat{\xi}^{r_1}_{n})-(\xi^{r_1}_{n-1}-\dehat{\xi}^{r_1}_{n-1})\big)
	\,\big(q_n-\dehat{q}_n-(\xi^{r_1}_n-\dehat{\xi}^{r_1}_{n})\big)\ge 0.
\end{equation}
Let $v_n=\max\{\,|\xi^{r_1}_n-\dehat{\xi}^{r_1}_{n}|\,, \mot{w-\dehat{w}}\}$, $n=0,1,\dots,\ell$.
We claim that $v_n\le v_{n-1}$ for  every $n=1,\dots,\ell$. Otherwise, if $v_n>v_{n-1}$
for some $n=1,\dots,\ell$, we would have
\begin{equation}\label{eq49}
|\xi^{r_1}_n-\dehat{\xi}^{r_1}_{n}|>\mot{w-\dehat{w}}
	\quad\mbox{and}\quad 
|\xi^{r_1}_n-\dehat{\xi}^{r_1}_{n}|>|\xi^{r_1}_{n-1}-\dehat{\xi}^{r_1}_{n-1}|.
\end{equation}
It is not hard to see that the second inequality yields
\[
\big((\xi^{r_1}_n-\dehat{\xi}^{r_1}_{n})-(\xi^{r_1}_{n-1}-\dehat{\xi}^{r_1}_{n-1})\big)
\,(\xi^{r_1}_n-\dehat{\xi}^{r_1}_{n}) > 0.
\]
This together with \eqref{eq46} implies 
\[
(\xi^{r_1}_n-\dehat{\xi}^{r_1}_{n})\,\big(q_n-\dehat{q}_n-(\xi^{r_1}_n-\dehat{\xi}^{r_1}_{n})\big)\ge 0,
\]
that is,
\begin{equation}\label{eq51}
(\xi^{r_1}_n-\dehat{\xi}^{r_1}_{n})\,
\big(w_n-\dehat{w}_n-(g_1(\tht_n,\xi^{r_1}_n)-g_1(\tht_n,\dehat{\xi}^{r_1}_n))-(\xi^{r_1}_n-\dehat{\xi}^{r_1}_{n})\big)\ge 0.
\end{equation}
Since $g_1$ is nondecreasing in the second variable we have
\[
(g_1(\tht_n,\xi^{r_1}_n)-g_1(\tht_n,\dehat{\xi}^{r_1}_n))(\xi^{r_1}_n-\dehat{\xi}^{r_1}_{n})\ge 0,
\]
and \eqref{eq51} reduces to
\[
(\xi^{r_1}_n-\dehat{\xi}^{r_1}_{n})^2\leq(\xi^{r_1}_n-\dehat{\xi}^{r_1}_{n})\,(w_n-\dehat{w}_n).
\]
Therefore $|\xi^{r_1}_n-\dehat{\xi}^{r_1}_{n}|\le\mot{w-\dehat{w}}$, contradicting \eqref{eq49}. Hence $\{v_n:n=0,1,\dots,\ell\}$ is nondecreasing and, in particular
\[
|\xi^{r_1}_\ell-\dehat{\xi}^{r_1}_{\ell}|
	\leq \max\{\,|\xi^{r_1}_0-\dehat{\xi}^{r_1}_{0}|\,,\,\mot{w-\dehat{w}}\,\}.
\]
Since by \eqref{play-Lip} and \eqref{q_0} we have
\[
|\xi^{r_1}_0-\dehat{\xi}^{r_1}_{0}|\leq |q_0-\dehat{q}_0|
	\leq |w_0-\dehat{w}_0|\leq \mot{w-\dehat{w}},
\]
it follows that $|\xi^{r_1}_\ell-\dehat{\xi}^{r_1}_{\ell}|\le \mot{w-\dehat{w}}$,
and we conclude that \eqref{eq42} holds for $k=1$.

Let $k\in\N$, $k>1$, and assume that \eqref{eq42} holds for $k-1$. Rewriting the equations in \eqref{eq41} we obtain
\begin{equation*}\label{eq55a}
\begin{split}
q(t)+\mathcal{P}_{k-1}(\tht)[q](t)=w(t)-g_k(\tht(t),\xi_{r_k}(t)),
\\[2mm]
\dehat{q}(t)+\mathcal{P}_{k-1}(\tht)[\dehat{q}](t)=\dehat{w}(t)-g_k(\tht(t),\dehat{\xi}_{r_k}(t)),
\end{split}
\end{equation*}
which, by the induction hypothesis and \eqref{lip}, yields
\begin{equation}\label{eq56}
|q(t)-\dehat{q}(t)|\le \rho_{k-1}\,
	\big(\mot{w-\dehat{w}}+\mu_k\,\big|\xi_{r_k}-\dehat{\xi}_{r_k}\big|_{[0,t]}\,\big).
\end{equation}
Thus, it suffices to show that $\big|\xi_{r_k}-\dehat{\xi}_{r_k}\big|_{[0,t]}\leq \mot{w-\dehat{w}}$ or, equivalently
\begin{equation}\label{eq56b}
|\xi_n^{r_k}-\dehat{\xi}_n^{r_k}|\leq \mot{w-\dehat{w}}\quad\mbox{for every \ }n=0,1,\dots,\ell.
\end{equation}
Applying \eqref{v-ineq-n} with an appropriate choice of $z\in\R$, for each $n=1,\dots,N$ we get
\[
(\xi^{r_k}_n-\xi^{r_k}_{n-1})\,\big(q_n-\dehat{q}_n-(\xi^{r_k}_n-\dehat{\xi}^{r_k}_{n})\big)\ge 0,
\qquad
(\dehat{\xi}^{r_k}_{n}-\dehat{\xi}^{r_k}_{n-1})\,\big(\dehat{q}_n-q_n-(\dehat{\xi}^{r_k}_{n}-\xi^{r_k}_n)\big)\ge 0,
\]
which implies
\begin{equation}\label{eq61}
\big((\xi^{r_k}_n-\dehat{\xi}^{r_k}_{n})-(\xi^{r_k}_{n-1}-\dehat{\xi}^{r_k}_{n-1})\big)
	\,\big(q_n-\dehat{q}_n-(\xi^{r_k}_n-\dehat{\xi}^{r_k}_{n})\big)\ge 0,
\quad n=1,\dots, N.
\end{equation}
On the other hand, for each $j=1,\dots, k-1,$ by applying Lemma \ref{Brokate-id} with $\rho=r_j$ and $\sigma=r_k-r_j$ we get $\xi_{r_k}=\play_{r_k-r_j}[\xi_{r_j}]$, which yields
\[
(\xi^{r_k}_n-\xi^{r_k}_{n-1})\,(\xi^{r_j}_n-\xi^{r_k}_n-(r_k-r_j)\,z)\ge 0
	\quad\mbox{for \ }z\in\R,\,\,|z|\le 1,\quad n=1,\dots,N.
\]
Thus, the choice $z=\frac{\dehat{\xi}^{r_j}_n-\dehat{\xi}^{r_k}_n}{r_k-r_j}$ leads to
\[
(\xi^{r_k}_n-\xi^{r_k}_{n-1})\,\big(\xi^{r_j}_n-\dehat{\xi}^{r_j}_{n}-(\xi^{r_k}_n-\dehat{\xi}^{r_k}_{n})\big)\ge 0,\quad j=1,\dots,k-1.
\]
Similarly, we deduce that
\[
(\dehat{\xi}^{r_k}_n-\dehat{\xi}^{r_k}_{n-1})\,\big(\dehat{\xi}^{r_j}_{n}-\xi^{r_j}_n-(\dehat{\xi}^{r_k}_{n}-\xi^{r_k}_n)\big)\ge 0,
\]
and consequently
\begin{equation}\label{eq62}
\big((\xi^{r_k}_n-\dehat{\xi}^{r_k}_{n})-(\xi^{r_k}_{n-1}-\dehat{\xi}^{r_k}_{n-1})\big)
	\,\big(\xi^{r_j}_n-\dehat{\xi}^{r_j}_{n}-(\xi^{r_k}_n-\dehat{\xi}^{r_k}_{n})\big)\ge 0,
\end{equation}
for every $n=1,\dots,N,$ and $j=1,\dots, k-1$. 
Let $v_n=\max\{\,|\xi^{r_k}_n-\dehat{\xi}^{r_k}_{n}|\,, \mot{w-\dehat{w}}\}$, $n=0,1,\dots,\ell$.
We claim that $v_n\le v_{n-1}$ for every $n=1,\dots,\ell$. Otherwise, if $v_n>v_{n-1}$ for some $n=1,\dots,N$,
we would have
\[
|\xi^{r_k}_n-\dehat{\xi}^{r_k}_{n}|>\mot{w-\dehat{w}}
	\quad\mbox{and}\quad 
|\xi^{r_k}_n-\dehat{\xi}^{r_k}_{n}|>|\xi^{r_k}_{n-1}-\dehat{\xi}^{r_k}_{n-1}|.
\]
It is not hard to see that the second inequality yields
\begin{equation}\label{eq70}
\big((\xi^{r_k}_n-\dehat{\xi}^{r_k}_{n})-(\xi^{r_k}_{n-1}-\dehat{\xi}^{r_k}_{n-1})\big)
	\,(\xi^{r_k}_n-\dehat{\xi}^{r_k}_{n})\ge 0.
\end{equation}
This together with \eqref{eq61} implies 
\[
(\xi^{r_k}_n-\dehat{\xi}^{r_k}_{n})\,\big(q_n-\dehat{q}_n-(\xi^{r_k}_n-\dehat{\xi}^{r_k}_{n})\big)\ge 0,
\]
that is,
\begin{equation}\label{eq71}
(\xi^{r_k}_n-\dehat{\xi}^{r_k}_{n})\,\left(w_n-\dehat{w}_n
	-\sum_{j=1}^k\big(g_j(\tht_n,\xi^{r_j}_n)-g_j(\tht_n,\dehat{\xi}^{r_j}_n)\big)
		-(\xi^{r_k}_n-\dehat{\xi}^{r_k}_{n})\right)\ge 0.
\end{equation}
Note that \eqref{eq62} and \eqref{eq70} imply that 
\[
(\xi^{r_k}_n-\dehat{\xi}^{r_k}_{n})
	\,\big(\xi^{r_j}_n-\dehat{\xi}^{r_j}_{n}-(\xi^{r_k}_n-\dehat{\xi}^{r_k}_{n})\big)\ge 0,
	\quad j=1,\dots, k-1,
\]
that is, $(\xi^{r_k}_n-\dehat{\xi}^{r_k}_{n})(\xi^{r_j}_n-\dehat{\xi}^{r_j}_{n})\ge 0$. Using this and the fact that  each $g_j$ is nondecreasing in the second variable we obtain
\[
(g_j(\tht_n,\xi^{r_j}_n)-g_j(\tht_n,\dehat{\xi}^{r_j}_n))(\xi^{r_k}_n-\dehat{\xi}^{r_k}_{n})\ge 0,\quad j=1,\dots, k,
\]
Therefore \eqref{eq71} reduces to
\[
(\xi^{r_k}_n-\dehat{\xi}^{r_k}_{n})^2\leq(\xi^{r_k}_n-\dehat{\xi}^{r_k}_{n})\,(w_n-\dehat{w}_n),
\]
which implies $|\xi^{r_k}_n-\dehat{\xi}^{r_k}_{n}|\le\mot{w-\dehat{w}}$, a contradiction.
Hence $\{v_n:n=0,1,\dots,\ell\}$ is nondecreasing and, in particular,
\[
|\xi^{r_k}_n-\dehat{\xi}^{r_k}_{n}|\leq \max\{\,|\xi^{r_k}_0-\dehat{\xi}^{r_k}_{0}|\,, \mot{w-\dehat{w}}\},
\quad n=1,\dots,\ell.
\]
From \eqref{play-Lip} and \eqref{q_0} it follows that $|\xi^{r_k}_0-\dehat{\xi}^{r_k}_{0}|\leq |w_0-\dehat{w}_0|$. Thus \eqref{eq56b} holds, and \eqref{eq42} follows from \eqref{eq56}. 
\end{proof}

\begin{corollary}\label{c1}
Let Hypotheses \ref{H2} (i), (ii) hold, let $\mathcal{P}_k$ be the mapping defined by \eqref{disc},
and  let $\tht, \dehat{\tht}:[0,T]\to\R^L$, $w, \dehat{w}:[0,T]\to\R$ be right-continuous step functions. 
Let $q, \dehat{q}\in G_R[0,T]$ be solutions of the equations
\begin{equation}\label{ce1}
\begin{split}
q(t)+\mathcal{P}_k(\tht)[q](t)=w(t),
\\[2mm]
\dehat{q}(t)+\mathcal{P}_k(\dehat{\tht})[\dehat{q}](t)=\dehat{w}(t),
\end{split}
\quad\qquad t\in[0,T],
\end{equation}
for some $k\in\N$, then for each $t\in[0,T]$ we have
\begin{equation}\label{ce2}
|q(t)-\dehat{q}(t)|\le \rho_k\,\left(\mot{w-\dehat{w}} 
	+ \left(\sum_{j=1}^k K_j\right)\Mot{\tht - \dehat{\tht}}\right)
\end{equation}
with $\rho_k$ as in Lemma \ref{p36a}.
\end{corollary}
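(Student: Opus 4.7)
The plan is to reduce Corollary~\ref{c1} to Lemma~\ref{p36a} by a standard perturbation trick: absorb the $\tht$--$\dehat\tht$ discrepancy into a modified right-hand side so that the two equations being compared share the same parameter function.

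First, I would work on a common refinement of the divisions underlying $\tht,\dehat\tht,w,\dehat w$. The variant of Theorem~\ref{exist-step} for $\mathcal{P}_k$ (as noted in the paragraph preceding Lemma~\ref{p36a}) guarantees that $\dehat q$ is a right-continuous step function on that refinement. I would then introduce the auxiliary function
\[
\tilde w(t):=\dehat w(t)+\mathcal{P}_k(\tht)[\dehat q](t)-\mathcal{P}_k(\dehat\tht)[\dehat q](t),\quad t\in[0,T],
\]
which is itself a right-continuous step function, because by \eqref{disc} each summand $g_j(\cdot,\dehat\xi_{r_j}(\cdot))$ depends only on the pointwise values of $\tht,\dehat\tht,\dehat q$. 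The second equation of \eqref{ce1} is then equivalent to $\dehat q(t)+\mathcal{P}_k(\tht)[\dehat q](t)=\tilde w(t)$, placing the pair $(q,\dehat q)$ in the setting of Lemma~\ref{p36a} with the common parameter $\tht$ and forcings $w,\tilde w$.

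Applying Lemma~\ref{p36a} yields $|q(t)-\dehat q(t)|\le \rho_k\,\mot{w-\tilde w}$. To finish, I would estimate $|w-\tilde w|$ pointwise. By Hypothesis~\ref{H2}\,(ii) applied to each $g_j$ at fixed second argument $\dehat\xi_{r_j}(s)$,
\[
\bigl|\mathcal{P}_k(\tht)[\dehat q](s)-\mathcal{P}_k(\dehat\tht)[\dehat q](s)\bigr|\le \sum_{j=1}^k K_j\,\|\tht(s)-\dehat\tht(s)\|,
\]
so the triangle inequality gives $|w(s)-\tilde w(s)|\le |w(s)-\dehat w(s)|+\bigl(\sum_{j=1}^k K_j\bigr)\|\tht(s)-\dehat\tht(s)\|$ for every $s\in[0,t]$. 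Taking the supremum over $[0,t]$ and combining with the Lemma~\ref{p36a} bound produces \eqref{ce2}.

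I do not foresee a real obstacle: Lemma~\ref{p36a} does the heavy lifting, and the only point worth double-checking is that $\tilde w$ is again a right-continuous step function so that the lemma applies verbatim; this is immediate from the piecewise-constant structure shared by all ingredients entering \eqref{disc}.
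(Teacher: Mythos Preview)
Your proposal is correct and follows essentially the same route as the paper: rewrite the second equation with the same parameter $\tht$ and a modified right-hand side $\tilde w=\dehat w+\mathcal{P}_k(\tht)[\dehat q]-\mathcal{P}_k(\dehat\tht)[\dehat q]$, apply Lemma~\ref{p36a}, and then estimate the $\tht$-perturbation term via Hypothesis~\ref{H2}\,(ii). Your explicit verification that $\tilde w$ is a right-continuous step function (so that Lemma~\ref{p36a} applies verbatim) is a point the paper leaves implicit.
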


\begin{proof}
Given an arbitrary $t\in[0,T]$, we rewrite \eqref{ce1} in the form
\begin{equation}\label{ce3}
\begin{split}
&q(t)+\mathcal{P}_k(\tht)[q](t)=w(t),
\\[2mm]
&\dehat{q}(t)+\mathcal{P}_k(\tht)[\dehat{q}](t)=\dehat{w}(t) + \mathcal{P}_k(\tht)[\dehat{q}](t)
- \mathcal{P}_k(\dehat{\tht})[\dehat{q}](t),
\end{split}
\end{equation}
hence, by Lemma \ref{p36a},
\be{ce4}
|q(t)-\dehat{q}(t)|\leq\rho_k\,\big(\mot{w-\dehat{w}}+\mot{(\mathcal{P}_k(\tht)-\mathcal{P}_k(\dehat{\tht}))[\dehat{q}]}\big).
\ee
{}From \eqref{disc}--\eqref{lip} it follows that
\begin{align*}
\mot{\mathcal{P}_k(\tht)[q]-\mathcal{P}_k(\dehat{\tht})[\dehat{q}]}
&=\mot{\sum_{j=1}^k (g_j(\tht,\play_{r_j}[\dehat{q}])-g_j(\dehat{\tht},\play_{r_j}[\dehat{q}]))}
\\ &\le \Big(\sum_{j=1}^k K_j\Big)\,\Mot{\tht- \dehat{\tht}},
\end{align*}
which we wanted to prove.
\end{proof}

\begin{corollary}\label{c2}
Let Hypotheses \ref{H2} (i), (ii) hold, let $\mathcal{P}_k$ be the mapping defined by \eqref{disc}, and 
let $\tht,\dehat{\tht}\in G_R([0,T];\R^L)$, \ $w, \dehat{w}\in G_R[0,T]$ be given. 
Then there exist solutions $q, \dehat{q}\in G_R[0,T]$ of the equations \eqref{ce1} and the inequality \eqref{ce2}
holds for all $t\in[0,T]$.
\end{corollary}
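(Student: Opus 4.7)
The plan is an approximation argument: obtain $q$ and $\dehat{q}$ as uniform limits of step-function solutions furnished by Theorem~\ref{exist-step} (in its $\mathcal{P}_k$-version noted just after Hypothesis~\ref{H2}), and use the quantitative stability from Corollary~\ref{c1} both to verify that the approximating sequences are Cauchy and, in the end, to recover the estimate \eqref{ce2} after passing to the limit.

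First I would choose sequences of right-continuous step functions $\tht^{(n)},\dehat{\tht}^{(n)}:[0,T]\to\R^L$ and $w^{(n)},\dehat{w}^{(n)}:[0,T]\to\R$ converging uniformly to $\tht,\dehat{\tht},w,\dehat{w}$ respectively; this is possible because elements of $G_R$ are precisely uniform limits of right-continuous step functions on $[0,T]$. For each $n$, the step-function existence result applied to $\mathcal{P}_k$ produces right-continuous step-function solutions $q^{(n)}$ and $\dehat{q}^{(n)}$ of the two equations in \eqref{ce1}. Applying Corollary~\ref{c1} to the triples $(q^{(n)},\tht^{(n)},w^{(n)})$ and $(q^{(m)},\tht^{(m)},w^{(m)})$ --- whose entries are all step functions, so the hypotheses are met --- gives
$$
\moT{q^{(n)}-q^{(m)}} \le \rho_k\left(\moT{w^{(n)}-w^{(m)}} + \Big(\sum_{j=1}^k K_j\Big)\MoT{\tht^{(n)}-\tht^{(m)}}\right).
$$
The right-hand side tends to $0$ as $n,m\to\infty$, so $\{q^{(n)}\}$ is Cauchy in the sup-norm; since uniform limits of right-continuous regulated functions remain in $G_R[0,T]$, we obtain $q^{(n)}\to q\in G_R[0,T]$ uniformly, and the same argument produces $\dehat{q}^{(n)}\to\dehat{q}\in G_R[0,T]$.

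To pass to the limit in the equations \eqref{ce1}, I would use the sup-norm Lipschitz bound
$$
|\mathcal{P}_k(\tht^{(n)})[q^{(n)}](t)-\mathcal{P}_k(\tht)[q](t)| \le \Big(\sum_{j=1}^k\mu_j\Big)\mot{q^{(n)}-q} + \Big(\sum_{j=1}^k K_j\Big)\|\tht^{(n)}(t)-\tht(t)\|,
$$
which follows at once from \eqref{play-Lip} combined with \eqref{lip}. Consequently the limits $q$ and $\dehat{q}$ satisfy \eqref{ce1}, and applying Corollary~\ref{c1} once more to $(q^{(n)},\dehat{q}^{(n)})$ and sending $n\to\infty$ in the resulting inequality delivers \eqref{ce2}. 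I do not foresee a real obstacle: the plan just mechanises the transfer from step-function data to general $G_R$ data, and it relies only on two routine ingredients --- completeness of $G_R[0,T]$ under the sup-norm and the Lipschitz continuity of $\mathcal{P}_k$ already implicit in the bounds of Hypothesis~\ref{H2}.
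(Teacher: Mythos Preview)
Your proposal is correct and follows essentially the same approximation argument as the paper's proof: approximate the data by right-continuous step functions, invoke the step-function solvability together with Corollary~\ref{c1} to show the approximate solutions form Cauchy sequences in $G_R[0,T]$, pass to the limit in the equations via the Lipschitz continuity of $\mathcal{P}_k$, and then pass to the limit in the estimate~\eqref{ce2}. The only cosmetic difference is that you spell out the Lipschitz bound for $\mathcal{P}_k$ explicitly when justifying that the limits solve \eqref{ce1}, whereas the paper leaves this step implicit.
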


\begin{proof}
We find sequences $\tht_i, \dehat{\tht}_i:[0,T]\to\R^L$, $w_i, \dehat{w}_i:[0,T]\to\R$
of right-continuous step functions, $i \in \N$, such that 
\[
\moT{w - w_i} \to 0, \quad \moT{\dehat{w} - \dehat{w}_i} \to 0, \quad 
\MoT{\tht - \tht_i} \to 0, \quad \MoT{\dehat{\tht} - \dehat{\tht}_i} \to 0
\]
as $i \to \infty$. Inequality
\eqref{ce2} implies that for every $t \in [0,T]$ and every $i, m \in \N$ we have
\begin{align*}
|q_i(t)-q_m(t)|
&\le \rho_k\,\left(\moT{w_i-w_m} + \left(\sum_{j=1}^k K_j\right)\MoT{\tht_i - \tht_m}\right),\\
|\dehat{q}_i(t)-\dehat{q}_m(t)|
&\le \rho_k\,\left(\moT{\dehat{w}_i-\dehat{w}_m}
+ \left(\sum_{j=1}^k K_j\right)\MoT{\dehat{\tht}_i - \dehat{\tht}_m}\right).
\end{align*}
In particular, $\{q_i\}$, $\{\dehat{q}_i\}$ are Cauchy sequences in $G_R[0,T]$ and their limits $q$, $\dehat{q}$
satisfy the equations \eqref{ce1}. To complete the proof, it suffices to pass to the limit as $i \to \infty$
in the inequality
\be{ce4a}
|q_i(t)- \dehat{q}_i(t)|\leq\rho_k\,\Big(\mot{w_i-\dehat{w}_i} + \Big(\sum_{j=1}^k K_j\Big)\,\Mot{\tht_i- \dehat{\tht}_i}\Big)
\ee
which is a consequence of \eqref{ce2}.
\end{proof}

We now pass from the discrete Preisach operator \eqref{disc} to the continuous operator \eqref{P}.

\begin{lemma}\label{p34}
Assume that $\psi$ satisfies Hypothesis \ref{basic}, and let $\mathcal{P}$ be the operator \eqref{P}.
Then for each $k \in \N$ there exists a discrete operator $\mathcal{P}_k$ of the form \eqref{disc} such that
for every $R>0$, $\tht\in G_R([0,T];\R^L)$, and $q\in G_R[0,T]$ such that $\moT{q} \le R$ we have
\begin{equation}\label{approx}
\moT{\mathcal{P}(\tht)[q]-\mathcal{P}_k(\tht)[q]} \le \frac{MR}{k}.
\end{equation}
\end{lemma}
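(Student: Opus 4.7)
The natural approach is a Riemann-sum approximation of the integral representation \eqref{Pg}. Given $k\in\N$ and $R>0$, I would partition $[0,R]$ uniformly by setting $r_j = jR/k$ for $j=1,\dots,k$ and defining
\[
g_j(\vtht, v) := \int_{(j-1)R/k}^{jR/k} g(\vtht, r, v)\,\dd r,
\]
with $g$ as in \eqref{g}. Verifying Hypothesis \ref{H2} is then immediate from the corresponding pointwise properties of $g$: $g_j(\vtht,0)=0$ because $g(\vtht,r,0)=0$, monotonicity in $v$ is inherited from $v\mapsto g(\vtht,r,v)$, and integrating \eqref{g-ineq} over $I_j:=[(j-1)R/k,\,jR/k]$ yields the Lipschitz estimate with constants $\mu_j = \int_{I_j}\mu(r)\,\dd r$ and $K_j = \int_{I_j} K^*(r)\,\dd r$. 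One also records $\sum_{j=1}^k \mu_j \le M$ and $\sum_{j=1}^k K_j \le M_1$, which is what keeps the constants appearing in Lemma \ref{p36a} and Corollary \ref{c1} under control uniformly in $k$.

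The error estimate then combines two previously established inequalities. Lemma \ref{lem3.5} gives $|\play_r[q](t) - \play_{r_j}[q](t)| \le |r - r_j| \le R/k$ whenever $r \in I_j$, and \eqref{g-ineq} applied with $\vtht_1=\vtht_2=\tht(t)$ gives $|g(\tht(t), r, v_1) - g(\tht(t), r, v_2)| \le \mu(r)\,|v_1 - v_2|$. Moreover, the hypothesis $\moT{q} \le R$ together with the observation (immediately after \eqref{P}) that $\play_r[q](t)=0$ for $r \ge \moT{q}$ implies that the integrand in \eqref{Pg} is supported in $r\in[0,R]$. Writing
\[
\mathcal{P}(\tht)[q](t) - \mathcal{P}_k(\tht)[q](t) = \sum_{j=1}^k \int_{I_j}\!\big[g(\tht(t), r, \play_r[q](t)) - g(\tht(t), r, \play_{r_j}[q](t))\big]\,\dd r
\]
and invoking the two bounds yields
\[
|\mathcal{P}(\tht)[q](t) - \mathcal{P}_k(\tht)[q](t)| \le \frac{R}{k}\int_0^R \mu(r)\,\dd r \le \frac{MR}{k},
\]
after which passing to the supremum over $t\in[0,T]$ gives \eqref{approx}.

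There is no serious obstacle; once the right partition is spotted and Lemma \ref{lem3.5} is invoked at the correct place, the rest is bookkeeping. The one subtlety worth flagging in the write-up is that the thresholds $r_j=jR/k$ depend on $R$, so the statement is best read as producing, for each pair $(k,R)$, a suitable discrete operator $\mathcal{P}_k$. This is enough for the intended application, since in the passage to the limit $k\to\infty$ one always works with inputs whose sup-norm is a priori controlled by a fixed $R$.
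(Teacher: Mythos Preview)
Your proposal is correct and follows essentially the same route as the paper: the same uniform partition $r_j=jR/k$, the same choice $g_j(\vtht,v)=\int_{r_{j-1}}^{r_j} g(\vtht,r,v)\,\dd r$, the same use of Lemma~\ref{lem3.5} to bound $|\play_r[q]-\play_{r_j}[q]|\le R/k$, and the same final estimate via \eqref{g-ineq}. Your observation that the thresholds $r_j$ depend on $R$ (so that $\mathcal{P}_k$ really depends on the pair $(k,R)$) is a legitimate point about how the lemma is phrased and is handled in the paper exactly as you suggest, by fixing $R$ a priori in the application.
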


\begin{proof}
Consider a division $0=r_0<r_1<\dots<r_k=R$ with $r_j = \frac{jR}{k}$. Let $P_k$ be
discrete Preisach operator \eqref{disc} with the choice 
\[
g_j(\tht,v)=\int_{r_{j-1}}^{r_{j}}\int_0^v\psi(\tht,r,s)\,\dd s\,\dd r,\quad j=1,\dots,k.
\]
Hypotheses \ref{basic} (i) and (ii) guarantee that the functions $g_j$ satisfy Hypothesis \ref{H2}, with
\be{muj}
K_j=\int_{r_{j-1}}^{r_{j}}\int_{-\infty}^{\infty}K(r,v)\dd v\,\dd r, \ \ \mu_j=\int_{r_{j-1}}^{r_{j}}\mu(r)\dd r.
\ee 
Denote $g(\tht,r,v)=\int_0^v\psi(\tht,r,s)\,\dd s$. We have $\play_r[q](t) = 0$ for all $r \ge R$ and all $t \in [0,T]$,
hence, using \eqref{g-ineq} we obtain that
\begin{align*}
\big|\mathcal{P}(\tht)[q]-\mathcal{P}_k(\tht)[q]\big|(t)
&=\Bigg|\int_0^\infty g(\tht(t),r,\play_r[q](t))\,\dd r-\sum_{j=1}^k g_j(\tht(t),\play_{r_j}[q](t))\Bigg|
\\&
=\Big|\sum_{j=1}^k\int_{r_{j-1}}^{r_{j}}\big(g(\tht(t),r,\play_{r}[q](t))-g(\tht(t),r,\play_{r_j}[q](t))\big)\,\dd r\Big|
\\&
\le \sum_{j=1}^k\int_{r_{j-1}}^{r_{j}}\mu(r)\,\moT{\play_{r}[q]-\play_{r_j}[q]}\,\dd r.
\end{align*}
Note that we have, by Lemma \ref{lem3.5}, that
$\moT{\play_{r}[q]-\play_{r_j}[q]}\le r_j-r\le r_j-r_{j-1} = \frac{R}{k}$ for $r\in[r_{j-1},r_j]$. 
Hence
\[
\moT{\mathcal{P}(\tht)[q]-\mathcal{P}_k(\tht)[q]}
\leq \sum_{j=1}^k(r_j-r_{j-1})\,\int_{r_{j-1}}^{r_{j}} \mu(r)\,\dd r
< \frac{R}{k}\int_0^R \mu(r)\,\dd r \le \frac{MR}{k},
\]
concluding the proof.
\end{proof}

We now state and prove the main result of this paper.

\begin{theorem}\label{p37}
Assume that $\psi$ satisfies Hypothesis \ref{basic}, and let $\tht,\,\dehat{\tht},\,w,\,\dehat{w}\in G_R[0,T]$ be given.
Then there exist solutions $q,\,\dehat{q}\in G_R[0,T]$ of the equations
\begin{equation}\label{eq77}
\begin{split}
q(t)+\mathcal{P}(\tht)[q](t)=w(t),
\\[2mm]
\dehat{q}(t)+\mathcal{P}(\dehat{\tht})[\dehat{q}](t)=\dehat{w}(t),
\end{split}
\qquad t\in[0,T],
\end{equation}
and the inequality
\begin{equation}\label{eq78}
|q(t)-\dehat{q}(t)|\le \expe^M\,\big(\mot{w-\dehat{w}} + M_1\,\Mot{\tht-\dehat{\tht}}\big)
\end{equation}
holds for each $t\in[0,T]$.
\end{theorem}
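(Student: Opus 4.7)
The plan is to approximate the Preisach operator $\mathcal{P}$ by the discrete operators $\mathcal{P}_k$ from Lemma \ref{p34}, use Corollary \ref{c2} to obtain solutions $q_k,\dehat{q}_k\in G_R[0,T]$ of the discretized equations together with the estimate \eqref{ce2}, and then pass to the limit as $k\to\infty$. The key observation that makes the whole scheme work is that, with the specific discretization chosen in Lemma \ref{p34}, the constants appearing in \eqref{ce2} are uniformly bounded in $k$: since $\mu_j=\int_{r_{j-1}}^{r_j}\mu(r)\,\dd r$ and $K_j=\int_{r_{j-1}}^{r_j}K^*(r)\,\dd r$, we have
\[
\rho_k=\prod_{j=1}^k(1+\mu_j)\le\exp\Big(\sum_{j=1}^k\mu_j\Big)\le\expe^M,\qquad \sum_{j=1}^k K_j\le M_1,
\]
which is precisely what produces the factors $\expe^M$ and $M_1$ in \eqref{eq78}.

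Two auxiliary points are needed. First, before invoking Lemma \ref{p34} I must fix a truncation radius $R$ such that $\moT{q_k}\le R$ and $\moT{\dehat{q}_k}\le R$, so that $\play_r[q_k]\equiv 0$ for $r\ge R$ and the approximation \eqref{approx} can be applied. To get this bound I apply Corollary \ref{c2} with $\dehat{\tht}=\tht$ and $\dehat{w}\equiv 0$, which admits $\dehat{q}\equiv 0$ as solution since $g_j(\vtht,0)=0$ by Hypothesis \ref{H2}; this yields $\moT{q_k}\le\rho_k\moT{w}\le\expe^M\moT{w}$, and analogously for $\dehat{q}_k$. Choosing $R:=\expe^M\max\{\moT{w},\moT{\dehat{w}}\}$ at the outset closes the loop. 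Second, to show $\{q_k\}$ is Cauchy I rewrite for any $k,m\in\N$
\[
q_k+\mathcal{P}_m(\tht)[q_k]=w+\big(\mathcal{P}_m(\tht)[q_k]-\mathcal{P}_k(\tht)[q_k]\big),
\]
apply Corollary \ref{c2} with the operator $\mathcal{P}_m$, and bound the right-hand side via the triangle inequality through $\mathcal{P}(\tht)[q_k]$ and \eqref{approx}, yielding $\moT{q_k-q_m}\le\expe^M\cdot MR(1/k+1/m)\to 0$. The same argument gives $\dehat{q}_k\to\dehat{q}$ uniformly on $[0,T]$.

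Passing to the limit in $q_k+\mathcal{P}_k(\tht)[q_k]=w$ relies on the decomposition
\[
\mathcal{P}_k(\tht)[q_k]-\mathcal{P}(\tht)[q]=\big(\mathcal{P}_k(\tht)[q_k]-\mathcal{P}(\tht)[q_k]\big)+\big(\mathcal{P}(\tht)[q_k]-\mathcal{P}(\tht)[q]\big),
\]
in which the first summand is controlled by \eqref{approx} and the second by the Lipschitz bound \eqref{P-Lip}. This proves \eqref{eq77}, and letting $k\to\infty$ in \eqref{ce2} with the uniform bounds $\rho_k\le\expe^M$ and $\sum_j K_j\le M_1$ delivers \eqref{eq78}. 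The main delicate point I expect is the apparent circularity of fixing $R$ before $\mathcal{P}_k$ is even defined --- this is resolved exactly by the $k$-independence of the estimate $\rho_k\le\expe^M$, which lets one commit to the radius $R$ at the beginning without knowing $q_k$.
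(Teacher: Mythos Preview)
Your proposal is correct and follows essentially the same route as the paper: fix $R=\expe^M\max\{\moT{w},\moT{\dehat{w}}\}$, build the discrete operators $\mathcal{P}_k$ from Lemma~\ref{p34} with this $R$, use Corollary~\ref{c2} to get solutions $q_k,\dehat q_k$ together with the a~priori bound $\moT{q_k}\le\rho_k\moT{w}\le R$, establish the Cauchy property via the rewriting trick and \eqref{approx}, and pass to the limit using $\rho_k\le\expe^M$, $\sum_j K_j\le M_1$. The only cosmetic differences are that the paper swaps the roles of $k$ and $m$ in the Cauchy step and occasionally cites Corollary~\ref{c1} where Corollary~\ref{c2} is what is actually needed for general $G_R$ data; your version is slightly cleaner in that respect.
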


\begin{proof}
We define the operators $\mathcal{P}_k$ as in Lemma \ref{p34}, put $R = \expe^M \max\{\moT{w}, \moT{\dehat{w}}\}$,
and define $q_k$, $\dehat{q}_k$ as solutions to the equations
\begin{equation}\label{eq77a}
\begin{split}
q_k(t)+\mathcal{P}_k(\tht)[q_k](t)=w(t),
\\[2mm]
\dehat{q}_k(t)+\mathcal{P}_k(\dehat{\tht})[\dehat{q}_k](t)=\dehat{w}(t).
\end{split}
\end{equation}
For $k,m \in \N$ we can write
$$
q_m(t)+\mathcal{P}_k(\tht)[q_m](t)=w(t) + \mathcal{P}_k(\tht)[q_m](t)-\mathcal{P}_m(\tht)[q_m](t),
$$
{}From Corollary \ref{c1} it follows that 
\be{ci1}
\moT{q_k-q_m}\le \rho_k\,\moT{\mathcal{P}_k(\tht)[q_m]-\mathcal{P}_m(\tht)[q_m]}.
\ee
Moreover, using Corollary \ref{c1} again for $\dehat{q} = \dehat{w} = 0$ and $\dehat{\tht} = \tht$, we obtain
$$
\moT{q_m} \le \rho_m \moT{w}.
$$
By \eqref{eq42} and \eqref{muj}, we have $\log\rho_m = \sum_{j=1}^m \log(1+\mu_j) \le \sum_{j=1}^m \mu_j \le \int_0^\infty \mu(r)\dd r = M$,
hence $\moT{q_m} \le \expe^M \moT{w} \le R$. We similarly have $\moT{\dehat{q}_m} \le R$. Lemma \ref{p34}
and \eqref{ci1} thus yield
\be{ci2}
\moT{q_k-q_m}\le MR\Big(\frac{1}{k} + \frac{1}{m}\Big)\expe^{M}, 
\ee
and similarly
\be{ci3}
\moT{\dehat{q}_k-\dehat{q}_m}\le MR\Big(\frac{1}{k} + \frac{1}{m}\Big)\expe^{M}. 
\ee
We see that $\{q_k\}$, $\{\dehat{q}_k\}$ are Cauchy sequences in $G_R[0,T]$.
Let us rewrite \eqref{eq77a} as follows
\begin{equation}\label{eq77b}
\begin{split}
q_k(t)+\mathcal{P}(\tht)[q_k](t)=w(t) + \mathcal{P}(\tht)[q_k](t) - \mathcal{P}_k(\tht)[q_k](t),
\\[2mm]
\dehat{q}_k(t)+\mathcal{P}(\dehat{\tht})[\dehat{q}_k](t)
=\dehat{w}(t)+\mathcal{P}(\dehat{\tht})[\dehat{q}_k](t)-\mathcal{P}_k(\dehat{\tht})[\dehat{q}_k](t).
\end{split}
\end{equation}
By virtue of Lemma \ref{p34}, we conclude that the limits $q = \lim_{k\to \infty} q_k$, $\dehat{q} = \lim_{k\to \infty} \dehat{q}_k$
are solutions to \eqref{eq77}. Furthermore, by Corollary \ref{c2} and \eqref{muj} we have for all $k\in \N$ that
\begin{align} \nonumber
|q_k(t)-\dehat{q}_k(t)| &\le \rho_k\,\left(\mot{w-\dehat{w}} + \left(\sum_{j=1}^k K_j\right)\Mot{\tht - \dehat{\tht}}]\right)\\ \label{ce2a}
&\le \expe^M\,\big(\mot{w-\dehat{w}} + M_1\,\Mot{\tht-\dehat{\tht}}\big),
\end{align}
and it suffices to pass to the limit as $k \to \infty$ to complete the proof.
\end{proof}


\section{Higher regularity}\label{regu}

For applications mentioned in Section \ref{phys}, we need to know that higher regularity of the inputs $w$, $\tht$
implies higher regularity of the solution $q$ to Eq.~\eqref{eq}. We denote by $C[0,T]$ the space
of continuous functions on $[0,T]$ and by $C([0,T];\R^L)$ the space of continuous vector functions.
The result can be stated as follows.

\begin{proposition}\label{pcont}
Assume that $\psi$ satisfies Hypothesis \ref{basic} and let $\tht\in C([0,T];\R^L)$ and $w\in C[0,T]$ be given.
Then the solution $q$ to \eqref{eq} belongs to $C[0,T]$. If, in addition, $\tht$ and $w$ are absolutely continuous,
then $q$ is absolutely continuous.
\end{proposition}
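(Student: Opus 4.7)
The plan is to derive for $q$ an analogue of the modulus-of-continuity inequality \eqref{P-reg}, namely
\[
|q(t)-q(s)|\le \expe^M\bigl(|w(\cdot)-w(s)|_{[s,t]} + M_1\,\|\tht(\cdot)-\tht(s)\|_{[s,t]}\bigr)
\]
for all $0\le s\le t\le T$. Both continuity and absolute continuity of $q$ will follow from this by elementary arguments applied to the corresponding properties of the data $w, \tht$.

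To obtain the estimate, I would freeze the inputs past a fixed time $s$. Set $\dehat w(\tau)=w(\tau)$, $\dehat\tht(\tau)=\tht(\tau)$, and $\dehat q(\tau)=q(\tau)$ on $[0,s]$, and $\dehat w(\tau)=w(s)$, $\dehat\tht(\tau)=\tht(s)$, $\dehat q(\tau)=q(s)$ on $(s,T]$. All three belong to the respective right-continuous regulated class. The key step is to check that $\dehat q$ solves $\dehat q+\mathcal{P}(\dehat\tht)[\dehat q]=\dehat w$. On $[0,s]$ this is immediate from the causality of the play operator and hence of $\mathcal{P}$. On $(s,T]$, inequality \eqref{play-l} applied to the constant function $\dehat q$ yields $\play_r[\dehat q](\tau)=\play_r[q](s)$, and since $\dehat\tht$ is also constant on $(s,T]$, formula \eqref{Pg} gives $\mathcal{P}(\dehat\tht)[\dehat q](\tau)=\mathcal{P}(\tht)[q](s)$. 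Consequently $\dehat q(\tau)+\mathcal{P}(\dehat\tht)[\dehat q](\tau)=q(s)+\mathcal{P}(\tht)[q](s)=w(s)=\dehat w(\tau)$, as required.

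Applying Theorem \ref{p37} to the pairs $(w,\tht)$ and $(\dehat w,\dehat\tht)$, whose solutions are $q$ and $\dehat q$, at an instant $t\ge s$, and observing that $\dehat q(t)=q(s)$, $|w-\dehat w|_{[0,t]}=|w(\cdot)-w(s)|_{[s,t]}$, and $\|\tht-\dehat\tht\|_{[0,t]}=\|\tht(\cdot)-\tht(s)\|_{[s,t]}$, yields the displayed estimate. Continuity of $q$ at any $t_0\in[0,T]$ then follows by taking either $s=t_0$ and $t\downarrow t_0$ (right continuity) or $t=t_0$ and $s\uparrow t_0$ (left continuity), using the uniform continuity of $w$ and $\tht$ on the compact interval $[0,T]$.

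For the absolute continuity part, I would further bound $|w(\cdot)-w(s)|_{[s,t]}\le \int_s^t|\dot w(\tau)|\,\dd\tau$ and $\|\tht(\cdot)-\tht(s)\|_{[s,t]}\le \int_s^t\|\dot\tht(\tau)\|\,\dd\tau$, which gives
\[
|q(t)-q(s)|\le \expe^M\int_s^t\bigl(|\dot w(\tau)|+M_1\|\dot\tht(\tau)\|\bigr)\,\dd\tau,
\]
identifying $q$ as a primitive of an $L^1$ function and hence absolutely continuous. The only nontrivial point in the whole argument is the identification of $\dehat q$ as the solution corresponding to the frozen data; this rests only on causality of the play operator and on \eqref{play-l}, so no estimates beyond those already established are needed.
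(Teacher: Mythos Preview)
Your argument is correct and follows essentially the same route as the paper: freeze the data past a time $s$, verify via causality and \eqref{play-l} that the constant extension of $q$ solves the frozen equation, and then invoke the Lipschitz estimate \eqref{eq78} from Theorem~\ref{p37} to obtain the modulus-of-continuity bound from which both continuity and absolute continuity follow. The paper carries this out pointwise at $t_0$ with $s=t_0-h$, while you state the resulting inequality for general $0\le s\le t\le T$ first, but the content is identical.
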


\begin{proof}
By Theorem \ref{p37}, a unique solution $q\in G_R[0,T]$ to \eqref{eq} exists.
To see that $q\in C[0,T]$ it remains to show that $q$ is left-continuous. Given $t_0\in(0,T]$ and $\varepsilon>0$,
we can choose $\delta>0$ such that for every $t_0 \in [\delta, T]$ we have 
\begin{equation}\label{w}
\expe^M\big(|w(\cdot)-w(t_0-\delta)|_{[t_0-\delta, t_0]} + M_1\|\tht(\cdot)-\tht(t_0-\delta)\|_{[t_0-\delta, t_0]}\big) < \varepsilon.
\end{equation}
For an arbitrary $0<h<\delta$ consider $\dehat{w}:[0,t_0]\to\R$ and $\dehat{\tht}:[0,t_0]\to\R^L$ defined by
\[
\dehat{w}(t)=
\begin{cases}
w(t)&t\in [0,t_0-h],
\\
w(t_0-h)&t\in[t_0-h,t_0],
\end{cases}
\qquad
\dehat{\tht}(t)=
\begin{cases}
\tht(t)&t\in [0,t_0-h],
\\
\tht(t_0-h)&t\in[t_0-h,t_0],
\end{cases}
\]
With $\dehat{w}$ and $\dehat{\tht}$, we associate the solution $\dehat{q}$ of the second equation of \eqref{eq77}.
By \eqref{eq78}, we have $\dehat{q}(t) = q(t)$ for $t \in [0,t_0-h]$. By definition \eqref{P} and \eqref{play},
the constant extension $q^*(t)=q(t_0-h)$ for $t\in [t_0-h,t_0]$ is a solution of the second equation of \eqref{eq77}.
Since the solution is unique, we have
\[
\dehat{q}(t)=
\begin{cases}
q(t)&t\in [0,t_0-h],
\\
q(t_0-h)&t\in[t_0-h,t_0]
\end{cases}
\]
and from \eqref{eq78} we infer that
\be{co1}
|q(t_0)- q(t_0-h)| \le \expe^M\big(|w(\cdot)-w(t_0-h)|_{[t_0-h, t_0]}
+ M_1\|\tht(\cdot)-\tht(t_0-h)\|_{[t_0-h, t_0]}\big) < \varepsilon.
\ee
which proves the continuity of $q$.

To prove the absolute continuity, we consider a finite sequence $0 \le a_1 < b_1 \le \dots \le a_n < b_n \le T$. By \eqref{co1},
we have on every interval $[a_i, b_i]$ the inequality
\be{co2}
|q(b_i)- q(a_i)| \le \expe^M\big(|w(\cdot)-w(a_i)|_{[a_i, b_i]} + M_1\|\tht(\cdot)-\tht(a_i)\|_{[a_i, b_i]}\big).
\ee
The absolute continuity of $q$ in the case when $w$ and $\tht$ are absolutely continuous now follows from the definition
of absolute continuity. In this case we also have
\be{co3}
|\dot q(t)| \le \expe^M\big(|\dot w(t)| + M_1\|\dot\tht(t)\|\big)\ \mbox{ a.\,e.},
\ee
where the dot denotes the derivative with respect to $t$.
\end{proof}

\end{document}